\crefname{hypothesis}{Hypothesis}{Hypotheses}
\renewcommand{\Im}{\operatorname{Im}}  
\newcommand{\Li}{\operatorname{Li}}    
\title{Derivative Corrections to the Trapezoidal Rule%
\thanks{Submitted to the editors August 14, 2018.
\funding{This work was supported in part by the U.S. Department of Energy,
Grants No.~DE-FG02-88ER40387 and No.~DE-NA0002905.}}}
\author{Carl R. Brune\thanks{Edwards Accelerator Laboratory,
  Department of Physics and Astronomy,
  Ohio University,
  Athens, Ohio 45701
  (\email{brune@ohio.edu}).} }
\begin{document}

\maketitle

\begin{abstract}
Extensions to the trapezoidal rule using derivative information
are studied for periodic integrands and integrals along the
entire real line. Integrands which are analytic within a half plane
or within a strip containing the path of integration are considered.
Derivative-free error bounds are obtained.
Alternative approaches to including derivative information are discussed.
\end{abstract}

\begin{keywords}
  trapezoidal rule, quadrature, Hermite interpolation
\end{keywords}

\begin{AMS}
  65D32
\end{AMS}

\section{Introduction}

The trapezoidal rule for numerical quadrature is remarkably accurate
when applied to periodic integrands or integrals along the
entire real axis.
We consider here extensions of this method where derivative
information is taken into account.
Trefethen and Weideman~\cite{Tre14} have recently produced a thorough
review of the trapezoidal rule, but did not cover derivative information.
In this work, we generalize the term ``trapezoidal rule'' to mean any numerical
quadrature scheme that utilizes information about the function being
integrated at equally-spaced quadrature points, with the rule treating
every point on the same footing (e.g., with equal weight for a typical
linear rule).

The use of derivative information in numerical quadrature has been
reviewed by Davis and Rabinowitz~\cite[section~2.8]{Dav84}; for a more
recent example, see Burg~\cite{Bur12}.
The case for utilizing derivative information in quadrature becomes
compelling when derivatives at the quadrature points can be calculated
with significantly less effort than the alternative of evaluating
the integrand at additional quadrature points.
This may be the case, for example, if the integrand
satisfies a differential equation.
The derivative corrections may also be useful for error analysis
in high-precision numerical quadrature~\cite{Bai06}.
Davis and Rabinowitz also noted that the calculation of derivatives often
requires additional ``pencil work'' -- a complication that has now been
removed for the most part by the advent of computer algebra.
The application derivative information to the trapezoidal rule was
pioneered by Kress for periodic functions~\cite{Kre72a} and functions on
the real line that are analytic within a strip containing
the path of integration~\cite{Kre72b}.

This paper is organized as follows. We first consider in
section~\ref{sec:periodic} periodic functions
which are analytic either within a half plane or a strip, with examples
presented in sections~\ref{sec:example_periodic_complex}
and~\ref{sec:example_periodic_real}.
We then consider in section~\ref{sec:real_line} functions on the real line
which are analytic within a strip or half plane.
In section~\ref{sec:Dinf} we consider the limit in which a large
number of derivatives are included and finally in
section~\ref{sec:other_approaches} some other approaches to taking
derivative information into account are discussed.
To the best of our knowledge, the results for functions that are analytic
within a half plane and the material in sections~\ref{sec:Dinf}
and~\ref{sec:other_approaches} are new.
We have utilized the notation of Trefethen and Weideman~\cite{Tre14}
to the extent possible and the proofs given below draw significantly
from their paper.

\section{Integrals over a Periodic Interval}
\label{sec:periodic}

Let $v$ be a real or complex function with period $2\pi$ on the real line
and consider the definite integral
\begin{equation} \label{eq:I_trapezoidal}
I=\int_0^{2\pi} v(\theta) \, d\theta.
\end{equation}
The trapezoidal rule approximation for this integral is given
by~\cite[(3.2)]{Tre14}
\begin{equation} \label{eq:trap_periodic}
I_N = \frac{2\pi}{N} \sum_{j=1}^N \, v(\theta_j),
\end{equation}
where $N>0$ is the number of quadrature points and $\theta_j = 2\pi j/N$.

Assuming that $v$ is $D$-times differentiable, we define a generalized
trapezoidal rule approximation that takes into account derivative information
at the quadrature points via
\begin{equation} \label{eq:IND}
I_{N,D} = \frac{2\pi}{N} \sum_{j=1}^N \sum_{k=0}^D \left(\frac{1}{N}\right)^k
  A_{k,D} \, v^{(k)}(\theta_j),
\end{equation}
where $D$ is the maximum derivative order included
and $A_{k,D}$ are constants, with $A_{0,D}=1$.
Note that we have defined $A_{k,D}$ to be independent of the particular
point $j$, which is an intuitive choice based on the symmetry of the
points but not a requirement.
For simplicity, we have assumed that no derivatives are skipped in
the sum over $k$, but this also is not a requirement.
The factor of $(1/N)^k$ has been inserted for convenience: with this
factor, the prescriptions for defining $A_{k,D}$ given below lead to
$A_{k,D}$ being independent of $N$.
We observe that for $D=0$, the
standard trapezoidal rule for periodic functions (\ref{eq:trap_periodic}),
which does not use derivatives, is recovered.

\begin{theorem}
\label{thm:periodic_restricted}
Suppose $v$ is $2\pi$-periodic and analytic and satisfies $|v(\theta)|\le M$
in the half-plane $\Im\,\theta>-a$ for some $a>0$.
Further suppose that $D$ is a positive integer, $k$
is an integer with $0\le k\le D$, and
\begin{equation} \label{eq:Ak_theorem}
i^k A_{k,D} = \frac{(-1)^D}{D!} s(D+1,k+1) ,
\end{equation}
where $s(D+1,k+1)$ are the Stirling numbers of the first kind.
Then for $N>0$ and $I_{N,D}$ as defined in (\ref{eq:IND})
\begin{equation} \label{eq:bound_periodic_plane}
|I_{N,D}-I| \le \frac{2\pi M}{(e^{aN}-1)^{D+1}}
\end{equation}
and the constant $2\pi$ is as small as possible.
\end{theorem}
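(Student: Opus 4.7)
The plan is to Fourier-analyze the error. Since $v$ is $2\pi$-periodic and bounded by $M$ on the half-plane $\Im\theta>-a$, I would first show that its Fourier coefficients $a_n=\tfrac{1}{2\pi}\int_0^{2\pi}v(\theta)e^{-in\theta}\,d\theta$ satisfy $a_n=0$ for all $n<0$ and $|a_n|\le Me^{-na}$ for $n\ge 0$. The tool is a contour shift: by Cauchy's theorem and $2\pi$-periodicity, $a_n=\tfrac{1}{2\pi}\int_0^{2\pi}v(t+iR)e^{-in(t+iR)}\,dt$ for any admissible $R>-a$, whence $|a_n|\le Me^{nR}$. Letting $R\to\infty$ kills $a_n$ for $n<0$, and letting $R\to -a^+$ gives the stated decay for $n\ge 0$. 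So $v(\theta)=\sum_{n\ge 0}a_n e^{in\theta}$.

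Next I would substitute this series (and the termwise differentiated series $v^{(k)}(\theta)=\sum_{n\ge 0}(in)^k a_n e^{in\theta}$) into the definition of $I_{N,D}$ in (\ref{eq:IND}), and use the discrete orthogonality $\sum_{j=1}^N e^{in\theta_j}=N$ if $N\mid n$ and $0$ otherwise. This collapses the double sum to
\begin{equation*}
I_{N,D}=2\pi\sum_{m\ge 0}a_{mN}\,P_D(im),\qquad P_D(x):=\sum_{k=0}^D A_{k,D}\,x^k.
\end{equation*}
Since $I=2\pi a_0$, the error is $I_{N,D}-I=2\pi\sum_{m\ge 1}a_{mN}P_D(im)$, and the problem reduces to controlling $P_D(im)$.

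The crux is to identify $P_D$ via the prescription (\ref{eq:Ak_theorem}). Writing $(im)^k=i^k m^k$ and inserting $i^k A_{k,D}=\tfrac{(-1)^D}{D!}s(D+1,k+1)$, one recognizes the falling factorial expansion $\sum_{j=0}^{D+1}s(D+1,j)x^j=x(x-1)\cdots(x-D)$; dividing out the $x$ factor (since $s(D+1,0)=0$) gives $\sum_{k=0}^D s(D+1,k+1)x^k=(x-1)(x-2)\cdots(x-D)$, so
\begin{equation*}
P_D(im)=\frac{(-1)^D}{D!}\,(m-1)(m-2)\cdots(m-D).
\end{equation*}
This value is $1$ at $m=0$, vanishes for $m=1,2,\ldots,D$, and has absolute value $\binom{m-1}{D}$ for $m\ge D+1$. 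Hence $I_{N,D}-I=2\pi\sum_{m\ge D+1}a_{mN}P_D(im)$, and the triangle inequality together with $|a_{mN}|\le Me^{-amN}$ yields the bound $2\pi M\sum_{m\ge D+1}\binom{m-1}{D}e^{-amN}$. The standard generating-function identity $\sum_{k\ge 0}\binom{D+k}{D}q^k=(1-q)^{-(D+1)}$ with $q=e^{-aN}$ evaluates this sum to $(e^{aN}-1)^{-(D+1)}$, proving (\ref{eq:bound_periodic_plane}).

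For sharpness of the constant $2\pi$, I would exhibit $v(\theta)=Me^{-(D+1)aN}e^{i(D+1)N\theta}$: it is $2\pi$-periodic, bounded by $M$ on $\Im\theta>-a$, and satisfies $I=0$ while $|I_{N,D}|=2\pi M e^{-(D+1)aN}|P_D(i(D+1))|=2\pi Me^{-(D+1)aN}$ (using $P_D(i(D+1))=(-1)^D$). The ratio of this error to the bound $2\pi M/(e^{aN}-1)^{D+1}$ tends to $1$ as $aN\to\infty$, so no constant smaller than $2\pi$ suffices. The principal obstacle is the algebraic step of matching $P_D$ to the falling factorial; once the Stirling identity is in hand the rest is bookkeeping plus a geometric sum.
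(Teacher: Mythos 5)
Your proposal is correct and follows essentially the same route as the paper: Fourier expansion with contour-shifted coefficient bounds ($c_\ell=0$ for $\ell<0$, $|c_\ell|\le Me^{-\ell a}$ for $\ell\ge 0$), discrete orthogonality to isolate the aliased modes, identification of the error polynomial as $\tfrac{(-1)^D}{D!}(m-1)\cdots(m-D)=(-1)^D\binom{m-1}{D}$, the negative-binomial sum, and the same extremal mode $e^{i(D+1)N\theta}$ for sharpness. The only cosmetic difference is that you verify the falling-factorial identity directly from the given Stirling-number formula, whereas the paper first derives the coefficients from the Vandermonde elimination conditions and then confirms the Stirling representation.
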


\begin{proof}
Since $v$ is analytic, it has the
uniformly and absolutely convergent Fourier series
\begin{equation} \label{eq:fourier}
v(\theta)=\sum_{\ell=-\infty}^{\infty} c_\ell e^{i\ell\theta},
\end{equation}
where the coefficients are given by
\begin{equation} \label{eq:fcoef_discrete}
c_\ell = \frac{1}{2\pi}\int_0^{2\pi} e^{-i\ell\theta}v(\theta)\,d\theta.
\end{equation}
From~(\ref{eq:I_trapezoidal}) and~(\ref{eq:fcoef_discrete}), we also have
\begin{equation} \label{eq:c0_discrete}
I=2\pi c_0.
\end{equation}
We define the auxiliary function
\begin{equation} \label{eq:vND}
v_{N,D}(\theta)=\sum_{k=0}^D \left(\frac{1}{N}\right)^k
  A_{k,D} \, v^{(k)}(\theta),
\end{equation}
which is also analytic and $2\pi$~periodic.
Using the expansion (\ref{eq:fourier}) we can then write
\begin{equation} \label{eq:vND_discrete}
v_{N,D}(\theta)= \sum_{\ell=-\infty}^{\infty} \sum_{k=0}^D
  A_{k,D} \left(\frac{i\ell}{N}\right)^k c_\ell e^{i\ell\theta},
\end{equation}
where we have used the fact that (\ref{eq:fourier}) is absolutely convergent
to justify differentiating and re-ordering the summation and
$\ell^k$ is understood to be unity if $\ell=k=0$.
Using the definition (\ref{eq:vND}), we may write (\ref{eq:IND}) as
\begin{equation}
I_{N,D} = \frac{2\pi}{N}  \sum_{j=1}^N v_{N,D}(\theta_j),
\end{equation}
which when combined with (\ref{eq:c0_discrete}) and
(\ref{eq:vND_discrete}) gives
\begin{equation} \label{eq:IND-I_first}
I_{N,D}-I = \frac{2\pi}{N} \sum_{j=1}^N \sum_{\ell=1}^\infty \sum_{k=0}^D
  A_{k,D} \left[ \left(\frac{i\ell}{N}\right)^k c_\ell e^{i\ell\theta_j} +
  \left(\frac{-i\ell}{N}\right)^k c_{-\ell} e^{-i\ell\theta_j} \right].
\end{equation}
Using the fact that
\begin{equation}
\sum_{j=1}^N e^{i\ell\theta_j} =
  \left\{\begin{array}{ll}
     N & \ell \bmod N = 0 \\
     0 & \text{otherwise} \end{array} \right.
\end{equation}
and redefining the index $\ell\rightarrow\ell N$, (\ref{eq:IND-I_first})
becomes
\begin{equation} \label{eq:IND-I}
I_{N,D}-I = 2\pi \sum_{\ell=1}^\infty \sum_{k=0}^D
  A_{k,D} \left[ (i\ell)^k c_{\ell N} +
  (-i\ell)^k c_{-\ell N} \right].
\end{equation}

The bound $|v(\theta)|\le M$ for $\Im\,\theta >-a$ provides a
constraint on the coefficients $c_\ell$, which may be quantified by
considering various integration contours for (\ref{eq:fcoef_discrete}).
For $\ell\ge 0$, shifting the interval [0,$2\pi$]
downward by a distance $a'<a$ into the lower half plane shows
$|c_\ell| \le Me^{-\ell a}$, where we have
taken $a'$ arbitrarily close to $a$ and noted that the contributions from
the sides of the contour vanish by periodicity.
For $\ell< 0$, the interval may be shifted upwards an arbitrary
distance $b$, which leads to $|c_\ell|\le Me^{\ell b}$.
Since $b$ is arbitrary, $c_\ell$ must vanish in this case.
Summarizing, we have
\begin{subequations}
\begin{alignat}{2} \label{eq:bound_cl_plane}
|c_\ell| & \le M e^{-\ell a} & \quad\quad & \ell\ge 0 \quad\quad \text{and} \\
c_\ell & = 0 && \ell < 0 .
\end{alignat}
\end{subequations}

With this restriction on the Fourier coefficients, (\ref{eq:IND-I}) now becomes
\begin{equation} \label{eq:IND-restricted}
I_{N,D}-I = 2\pi \sum_{\ell=1}^\infty \sum_{k=0}^D (i\ell)^k A_{k,D} \, c_{\ell N}.
\end{equation}
In view of the geometric decay of the Fourier coefficients,
we will choose the remaining $A_{k,D}$ to eliminate as many low-order
Fourier coefficients as possible from the right-hand-side of
(\ref{eq:IND-restricted}). We thus now require
\begin{equation} \label{eq:vander_restricted}
\sum_{k=1}^D (i\ell)^k A_{k,D} =-1 \quad\quad 1\le \ell \le D.
\end{equation}
This is an inhomogeneous Vandermonde system for $i^k A_{k,D}$,
which must have a unique non-trivial solution.
It is useful to consider the quantity
\begin{equation} \label{eq:E_ell}
E_{\ell,D} =  \sum_{k=0}^D (i\ell)^kA_{k,D} =\prod_{k=1}^D(1-\ell/k),
\end{equation}
where the factorization results from the definition $A_{0,D}=1$
and the observation that $E_{\ell,D}$ is a polynomial in $\ell$ of degree $D$
that according to the definition (\ref{eq:vander_restricted})
has zeros for the first $D$ positive integers.
We see that $i^DA_{D,D}=(-1)^D/D!$ and that
\begin{equation}
E_{\ell,D} = (1-\ell/D) E_{\ell,D-1} , \quad D\ge 1,
\end{equation}
which implies a recurrence formula:
\begin{equation} \label{eq:recur_Ak}
i^k A_{k,D} = i^k A_{k,D-1} -\frac{i^{k-1} A_{k-1,D-1}}{D},
  \quad 1\le k\le D-1, D\ge 1 .
\end{equation}
The coefficients may also be represented by
\begin{equation} \label{eq:Ak}
i^kA_{k,D} = \frac{(-1)^D}{D!} s(D+1,k+1),
\end{equation}
where $s(D+1,k+1)$ are the Stirling numbers of the first kind~\cite{Bre10}.
This result can be confirmed by noting that it correctly yields
$A_{0,D}=1$, $i^DA_{D,D}=(-1)^D/D!$, and, using the recurrence formula
for the Stirling numbers of the first kind~\cite[(26.8.18)]{Bre10}, satisfies
(\ref{eq:recur_Ak}).

Using the factorized form of $E_{\ell,D}$ we also find
\begin{equation} \label{eq:E_binomial}
E_{\ell,D} = (-1)^D\binom{\ell-1}{D} , \quad \ell>D
\end{equation}
and thus (\ref{eq:IND-restricted}) becomes
\begin{equation} \label{eq:IND-I-Eell}
I_{N,D}-I = 2\pi \sum_{\ell=D+1}^\infty c_{\ell N} E_{\ell,D} .
\end{equation}
Using the bound on the Fourier coefficients (\ref{eq:bound_cl_plane})
and (\ref{eq:E_binomial}), we then obtain
\begin{equation} \label{eq:bound_E_ell}
|I_{N,D}-I| \le 2\pi M \sum_{\ell=D+1}^\infty e^{-a\ell N}\binom{\ell-1}{D},
\end{equation}
which upon summing the series is (\ref{eq:bound_periodic_plane}).

To show the sharpness of the constant $2\pi$ in the bound
(\ref{eq:bound_periodic_plane}) we consider
\begin{equation}
v(\theta) = e^{i(D+1)N\theta},
\end{equation}
which has $I=0$ and has vanishing Fourier coefficients except for
$c_{(D+1)N}=1$ and leads to
\begin{equation}
I_{N,D}-I = 2\pi E_{D+1,D} = 2\pi\, (-1)^D .
\end{equation}
The sharp bound on this $v(\theta)$ for $\Im\,\theta>-a$ is
\begin{equation}
|v(\theta)| < M = e^{a(D+1)N} .
\end{equation}
The bound (\ref{eq:bound_periodic_plane}) is seen to be
asymptotic to the exact result for $|I_{N,D}-I|$ as $N\rightarrow\infty$.
\end{proof}

This result is an extension of Theorem~3.1 of Trefethen and
Weideman~\cite{Tre14}, which makes the same assumptions
regarding $v(\theta)$ and finds that the error of the usual
trapezoidal rule to be $|I_{N,0}-I|=O(e^{-aN})$ for $N\rightarrow\infty$.
When derivative information is included, we find that the rate of
geometric convergence can be improved to $O(e^{-a(D+1)N})$.
Practically speaking, one thus expects the number of quadrature points needed
to achieve a given level of precision to be reduced by a factor of $(D+1)$
when derivative information is considered.

We also observe that the bound (\ref{eq:bound_periodic_plane}) implies
\begin{equation}
\lim_{D\rightarrow\infty} |I_{N,D}-I|=0 \quad a,N>0,
\end{equation}
where the convergence is geometric.
However, there are practical issues when $D$ is large, as
there must be large cancellations in $I_{N,D}$ in this limit:
consider, for example,
\begin{equation} \label{eq:A_1}
iA_{1,D} = \sum_{k=1}^D \frac{1}{k}
\end{equation}
which diverges as $D\rightarrow\infty$.

In Table~\ref{tab:a_coeff} we present $A_{k,D}$ for
$D=1,$~2, and~3.
A numerical example of this quadrature formula is
provided below in section~\ref{sec:example_periodic_complex}.

\begin{table}[tp]
{\footnotesize
  \caption{The constants $A_{k,D}$ defined by (\ref{eq:Ak_theorem}),
  for the three lowest $D$ values. }
\label{tab:a_coeff}
\begin{center}
\begin{tabular}{c|ccc}
D & $A_{1,D}$ & $A_{2,D}$ & $A_{3,D}$
\rule[-0.5em]{0pt}{0.5em} \\ \hline
\rule{0pt}{1.0em}%
1 & $i$        & -          & -      \\
2 & $3i/2$     & $-1/2$     & -      \\
3 & $11i/6$    & $-1$       & $-i/6$ \\ \hline
\end{tabular}
\end{center}
}
\end{table}

Due to the restrictions on $v(\theta)$, this theorem is not applicable
to real integrands, unless they are a constant.
We will next consider a similar extension to Theorem~3.2 of Trefethen and
Weideman~\cite{Tre14}, which has a less restrictive condition on $v(\theta)$
and may be applied to real integrands.

\begin{theorem}
\label{thm:periodic_strip}
Suppose $v$ is $2\pi$-periodic and analytic and satisfies $|v(\theta)|\le M$
in the strip $|\Im\,\theta|<a$ for some $a>0$.
Further suppose that $D$ is a positive even integer, $\ell$ and
$m$ are integers with $1\le \ell,m\le D/2$,
$B_{0,D}=1$, $B_{2m-1,D}=0$, and
$B_{2m,D}$ are the solution to the Vandermonde system
\begin{equation} \label{eq:B2m}
\sum_{m=1}^{D/2} (-1)^{m}  \ell^{2m} B_{2m,D} = -1 .
\end{equation}
Then with $I_{N,D}$ as defined in (\ref{eq:IND})
with $B_{k,D}$ replacing $A_{k,D}$ therein and $N>0$
\begin{subequations} \label{eq:bound_period_strip_both}
\begin{equation} \label{eq:bound_periodic_strip}
|I_{N,D}-I| \le \frac{4\pi M}{(1-e^{-a N})^{D+1}} \left| \sum_{\ell=D/2+1}^{D+1}
  (-1)^\ell \binom{D+1}{\ell} e^{-a\ell N} \right| ,
\end{equation}
and for $N\rightarrow\infty$
\begin{equation} \label{eq:bound_periodic_strip_asymp}
|I_{N,D}-I| \le 4\pi M \binom{D+1}{D/2} e^{-a(D/2+1)N}
  \left[ 1 + O(e^{-aN}) \right] ,
\end{equation}
\end{subequations}
and the constant $4\pi$ is as small as possible.
\end{theorem}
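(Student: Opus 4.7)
My plan is to mirror the proof of Theorem~\ref{thm:periodic_restricted} with $B_{k,D}$ in place of $A_{k,D}$. The derivation (\ref{eq:fourier})--(\ref{eq:IND-I}) goes through verbatim; the only change in the complex-analytic step is that the strip assumption lets us shift the contour in (\ref{eq:fcoef_discrete}) in either direction, yielding the symmetric two-sided bound $|c_\ell|\le Me^{-|\ell|a}$ for all integers $\ell$. Thus both $c_{\ell N}$ and $c_{-\ell N}$ now contribute in (\ref{eq:IND-I}).

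Next I would exploit the symmetry encoded in $B_{2m-1,D}=0$: because $(i\ell)^{2m}=(-i\ell)^{2m}=(-1)^m\ell^{2m}$, only even-order terms survive and they combine the $\pm\ell$ contributions symmetrically. Setting
\[
F_{\ell,D}=\sum_{m=0}^{D/2}(-1)^m\ell^{2m}B_{2m,D},
\]
the analogue of (\ref{eq:IND-restricted}) reads $I_{N,D}-I=2\pi\sum_{\ell\ge 1}F_{\ell,D}(c_{\ell N}+c_{-\ell N})$. Together with $B_{0,D}=1$, the Vandermonde condition (\ref{eq:B2m}) forces $F_{\ell,D}=0$ for $\ell=1,\ldots,D/2$, and since $F_{\ell,D}$ is a polynomial of degree $D/2$ in $\ell^2$ with constant term $1$ and known roots one reads off
\[
F_{\ell,D}=\prod_{m=1}^{D/2}\!\Bigl(1-\frac{\ell^2}{m^2}\Bigr)=(-1)^{D/2}\binom{\ell-1}{D/2}\binom{\ell+D/2}{D/2},
\]
the second equality being a polynomial identity in $\ell$ verifiable by comparing values at $\ell=0,\pm 1,\ldots,\pm D/2$. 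For $\ell>D/2$ this quantity has constant sign $(-1)^{D/2}$, so after applying $|c_{\pm\ell N}|\le Me^{-a\ell N}$ one obtains
\[
|I_{N,D}-I|\le 4\pi M\,\Bigl|\sum_{\ell=D/2+1}^{\infty} F_{\ell,D}\,x^\ell\Bigr|,\qquad x=e^{-aN}.
\]

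The remaining and most substantial step, which I expect to be the main obstacle, is summing this series in closed form. My plan is to establish the generating-function identity
\[
\sum_{\ell=0}^{\infty} F_{\ell,D}\,x^\ell=\frac{\sum_{j=0}^{D/2}(-1)^j\binom{D+1}{j}x^j}{(1-x)^{D+1}}
\]
by rewriting the degree-$D$ polynomial $F_{\ell,D}$ in the form $\sum_{j=0}^{D/2}(-1)^j\binom{D+1}{j}\binom{\ell-j+D}{D}$, which one verifies as a polynomial identity in $\ell$ by matching values at the $D+1$ integers $\ell=0,\pm 1,\ldots,\pm D/2$ (using the known zeros of $F_{\ell,D}$ and $F_{0,D}=1$, together with the fact that the sum above is seen to vanish for $\ell=1,\ldots,D/2$ by extending the summation to $j=D+1$ without change). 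Term-by-term use of $\sum_{\ell\ge 0}\binom{\ell+D}{D}x^\ell=(1-x)^{-(D+1)}$ then yields the identity. Subtracting the $\ell=0$ contribution $F_{0,D}=1$ and rewriting the resulting numerator as $-\bigl((1-x)^{D+1}-\sum_{j=0}^{D/2}(-1)^j\binom{D+1}{j}x^j\bigr)=-\sum_{\ell=D/2+1}^{D+1}(-1)^\ell\binom{D+1}{\ell}x^\ell$ gives exactly (\ref{eq:bound_periodic_strip}). The asymptotic form (\ref{eq:bound_periodic_strip_asymp}) is then immediate on retaining only the leading $\ell=D/2+1$ term of the numerator, using $(1-x)^{D+1}=1+O(x)$, and noting $\binom{D+1}{D/2+1}=\binom{D+1}{D/2}$.

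For sharpness of the constant $4\pi$, I would take $v(\theta)=\cos\!\bigl((D/2+1)N\theta\bigr)$, whose only nonvanishing Fourier coefficients are $c_{\pm(D/2+1)N}=\tfrac12$ and whose sharp strip bound is $M=\cosh(a(D/2+1)N)\sim\tfrac12 e^{a(D/2+1)N}$ as $N\to\infty$. Direct evaluation gives $|I_{N,D}-I|=2\pi\,|F_{D/2+1,D}|=2\pi\binom{D+1}{D/2}$, which is asymptotic to the leading-order bound $4\pi M\binom{D+1}{D/2}e^{-a(D/2+1)N}$, confirming that the constant $4\pi$ cannot be reduced.
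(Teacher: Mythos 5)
Your proposal is correct and follows essentially the same route as the paper: the same Fourier-coefficient bound $|c_\ell|\le Me^{-|\ell|a}$ from shifting the contour both ways, the same polynomial $F_{\ell,D}$ with its product and binomial forms, the same constant-sign argument to pull the absolute value outside the sum, and the same (up to a factor of $2$) extremal function $\cos\bigl((D/2+1)N\theta\bigr)$ for sharpness. The one genuine addition is that you actually prove the summation identity (\ref{eq:sum_F_ell}) via the expansion $F_{\ell,D}=\sum_{j=0}^{D/2}(-1)^j\binom{D+1}{j}\binom{\ell-j+D}{D}$ and the generating function $\sum_{\ell\ge0}\binom{\ell+D}{D}x^\ell=(1-x)^{-(D+1)}$, whereas the paper states that identity without proof; your verification of it (matching a degree-$D$ polynomial at the $D+1$ points $\ell=0,\pm1,\dots,\pm D/2$ and using the vanishing of the $(D+1)$st finite difference) is sound.
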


\begin{proof}
The proof is very similar to Theorem \ref{thm:periodic_restricted}.
Equations (\ref{eq:fourier})-(\ref{eq:IND-I}) continuing to hold, with
$B_{k,D}$ replacing $A_{k,D}$.
The bound $|v(\theta)|\le M$ for $|\Im\, \theta|<a$ provides a weaker
constraint on the Fourier coefficients.
For $\ell\ge 0$, the bound on $c_\ell$ is unchanged.
For $\ell\le 0$, the integration interval in (\ref{eq:fcoef_discrete})
may be shifted upward by a distance $a'<a$ which leads to
$|c_\ell| \le Me^{\ell a}$. Summarizing, we now have
\begin{equation} \label{eq:bound_cl_strip}
|c_\ell| \le M e^{-|\ell|a}.
\end{equation}

In this case, the remainder (\ref{eq:IND-I}) now becomes
\begin{equation} \label{eq:IND-restricted-strip}
I_{N,D}-I = 2\pi \sum_{\ell=1}^\infty \sum_{k=0}^D
  B_{k,D} \left[ (i\ell)^k c_{\ell N} +
  (-i\ell)^k c_{-\ell N} \right].
\end{equation}
For a given value of $\ell$ in (\ref{eq:IND-restricted-strip}),
the Fourier coefficients appear in pairs, $c_{\ell N}$ and $c_{-\ell N}$,
that are of comparable magnitude.
We will again choose the remaining $B_{k,D}$ to eliminate as many of the
low-order Fourier components as possible.
In order make the contribution of a particular pair vanish, we require
\begin{subequations} \label{eq:vander-restricted-strip}
\begin{align}
& 1+\sum_{k=1}^D (i\ell)^k B_{k,D} = 0 \quad\quad \text{and} \\
& 1+\sum_{k=1}^D (-i\ell)^k B_{k,D} = 0 .
\end{align}
\end{subequations}
Adding or subtracting these equations decouples the even and odd coefficients:
\begin{subequations}
\begin{align} \label{eq:vander_even}
& \sum_{m=1}^{D/2} (-1)^{m}\ell^{2m} B_{2m,D} = -1 \quad\quad \text{and} \\
& \sum_{m=1}^{D/2} (-1)^{m} \ell^{2m-1} B_{2m-1,D} = 0 , \label{eq:vander_odd}
\end{align}
\end{subequations}
where were have now restricted $D$ to be even.
Because there are two equations for each $\ell$ value, this assumption
allows us to match the number of equations to the number of unknown $B_{k,D}$
by considering $\ell$ values from one up to $D/2$.
Since (\ref{eq:vander_even}) is an inhomogeneous real Vandermonde system for
$(-1)^{m}B_{2m,D}$, it has a unique non-trivial solution.
For the odd coefficients, (\ref{eq:vander_odd}) is a homogeneous Vandermonde
system for $(-1)^{m} B_{2m-1,D}$ and  its only solution is the
trivial one,
\begin{equation}
B_{2m-1,D} = 0 , \quad\quad 1\le m\le D/2.
\end{equation}
We note that if $D$ is permitted to be odd there is ambiguity in the definition
of $B_{2m,D}$ because considering $\ell$ values up to $(D-1)/2$ does not
provide enough equations to uniquely determine the coefficients, but
increasing the maximum $\ell$ value by one overdetermines them.

It is useful to consider the quantity
\begin{equation} \label{eq:F_ell}
F_{\ell,D} = \sum_{m=0}^{D/2} (-1)^m \ell^{2m} B_{2m,D}
 = \prod_{m=1}^{D/2}\left[1-(\ell/m)^2\right],
\end{equation}
where the product form results from noting that $F_{\ell,D}$ is a polynomial
in $\ell^2$ of degree $D/2$ with $F_{0,D}=1$ and that the fact that,
according to (\ref{eq:vander_even}), $F_{\ell,D}$ is zero for
when $\ell$ is one of the first $D/2$ positive integers.
In addition, we note that $F_{\ell,D}$ is nonzero and monotonically increasing
in absolute value for $\ell > D/2$.
From this equation, one can observe at once that
\begin{align} \label{eq:sum_inverse_squared}
B_{2,D} &= \sum_{m=1}^{D/2} \frac{1}{m^2} \quad\quad \text{and} \\
B_{D/2,D} &= \frac{1}{[(D/2)!]^2}.
\end{align}
Following Kress~\cite{Kre72a,Kre72b}, a recurrence relation for
the $B_{2m,D}$ coefficients may be derived by noting
\begin{equation} \label{eq:F_ell2}
F_{\ell,D} = \left[1-\left(2\ell/D\right)^2\right] F_{\ell,D-2}, \quad D\ge 2
\end{equation}
which provides
\begin{equation}
B_{2m,D} = B_{2m,D-2} + (2/D)^2 B_{2m-2,D-2}, \quad 1\le m\le D/2,
  D\ge 2.
\end{equation}
Using the factorized form of $F_{\ell,D}$, one readily finds
\begin{equation} \label{eq:F_ell_binomial}
F_{\ell,D} = (-1)^{D/2} \binom{\ell+D/2}{D/2}\binom{\ell-1}{D/2},
  \quad \ell > D/2
\end{equation}

With this definition for $F_{\ell,D}$, (\ref{eq:IND-restricted-strip})
becomes
\begin{equation} \label{eq:IND-I-Fell}
I_{N,D}-I = 2\pi \sum_{\ell=D/2+1}^\infty ( c_{\ell N} + c_{-\ell N} ) F_{\ell,D} .
\end{equation}
Using the bound on the Fourier coefficients (\ref{eq:bound_cl_strip}),
we then obtain
\begin{equation} \label{eq:bound_F_ell}
I_{N,D}-I = 4\pi M \left|\sum_{\ell=D/2+1}^\infty e^{-a\ell N} F_{\ell,D}
  \right| ,
\end{equation}
where we have used the fact that all $F_{\ell,D}$ have the same sign
for $\ell>D/2$ to justify moving the absolute value outside of the summation.
Making use of the identity
\begin{equation} \label{eq:sum_F_ell}
\begin{split}
(-1)^{D/2} \sum_{\ell=D/2+1}^\infty & \binom{\ell+D/2}{D/2}\binom{\ell-1}{D/2}
  e^{-a\ell N} \\ = & \frac{-1}{(1-e^{-a N})^{D+1}} \sum_{\ell=D/2+1}^{D+1}
  (-1)^\ell \binom{D+1}{\ell} e^{-a\ell N} ,
\end{split}
\end{equation}
one obtains (\ref{eq:bound_periodic_strip}), which is asymptotically
equivalent to the bound (\ref{eq:bound_periodic_strip_asymp})
as $N\rightarrow\infty$.

To show the sharpness of the constant $4\pi$ in the bounds
(\ref{eq:bound_periodic_strip}) and (\ref{eq:bound_periodic_strip_asymp}),
we consider
\begin{equation}
v(\theta)=2\cos(D/2+1)N\theta,
\end{equation}
which has $I=0$ and vanishing Fourier
coefficients except for $c_{\pm(D/2+1)N}=1$ and leads to
\begin{equation}
I_{N,D} - I = 4\pi \sum_{m=0}^{D/2} (-1)^m (D/2+1)^{2m} B_{2m,D} =
  4\pi (-1)^{D/2}\binom{D+1}{D/2}.
\end{equation}
The sharp bound on this $v(\theta)$ for $|\Im\,\theta|<a$ is
\begin{equation}
|v(\theta)| < M=2\cosh(D/2+1)Na .
\end{equation}
The bounds (\ref{eq:bound_periodic_strip}) and
(\ref{eq:bound_periodic_strip_asymp}) are both seen to be asymptotic to
the exact result for $|I_{N,D}-I|$ as $N\rightarrow\infty$.
\end{proof}

Theorem~3.2 of Trefethen and Weideman~\cite{Tre14}, which makes the same
assumptions regarding $v(\theta)$, finds that the error of the usual
trapezoidal rule to be $|I_{N,0}-I|=O(e^{-aN})$ for $N\rightarrow\infty$.
When derivative information is included, we find that the rate of
geometric convergence can be improved to $|I_{N,D}-I|=O(e^{-a(D/2+1)N})$.
Interestingly, the coefficients of the odd derivatives in (\ref{eq:IND})
are found vanish  -- which implies they are not useful
for improving the accuracy of the trapezoidal rule in this case.
This quadrature rule appears to have been  first derived by
Kress~\cite{Kre72a}.
Our error bound is somewhat tighter, as Kress (in our notation) utilized
\begin{equation} \label{eq:kress_approx}
\left| \sum_{\ell=D/2+1}^{D+1} (-1)^\ell \binom{D+1}{\ell} e^{-a\ell N} \right|
  \le 2^D e^{-a(D/2+1)N} ,
\end{equation}
which is only sharp for $D=0$.
The leading behavior of the error bound (\ref{eq:bound_periodic_strip_asymp})
is consistent with the findings of Wilhelmsen~\cite{Wil78}.
A numerical demonstration of this quadrature rule is provided below in
section~\ref{sec:example_periodic_real}.

\begin{table}[tp]
{\footnotesize
  \caption{The constants $B_{2m,D}$ defined by (\ref{eq:B2m})
  and $F_{D/2+1,D}$ defined by (\ref{eq:F_ell_binomial}) which are
  applicable to Theorems~\ref{thm:periodic_strip} and~\ref{thm:infinite_strip},
  for the three lowest even $D$ values. }
\label{tab:b_coeff}
\begin{center}
\begin{tabular}{c|ccc|c}
D & $B_{2,D}$ & $B_{4,D}$ & $B_{6,D}$ &
  $F_{D/2+1,D} = (-1)^{D/2}\binom{D+1}{D/2}$ \\ \hline
\rule{0pt}{1.3em}%
2 & 1          & -          & -          & -3 \\
4 & 5/4        & 1/4        & -          & 10 \\
6 & 49/36      & 7/18       & 1/36       & -35 \\ \hline
\end{tabular}
\end{center}
}
\end{table}

The  polylogarithm function
\begin{equation} \label{eq:polylog}
\Li_{-k}(z) = \sum_{\ell=1}^\infty \ell^k z^\ell , \quad |z|<1
\end{equation}
may be used to write remainder bound in third form, in addition to
(\ref{eq:bound_periodic_strip}) or (\ref{eq:bound_F_ell}) with
(\ref{eq:F_ell_binomial}) for $F_{\ell,D}$.
Using (\ref{eq:F_ell}) for $F_{\ell,D}$ in (\ref{eq:bound_F_ell})
with (\ref{eq:polylog}), we have
\begin{equation}
\sum_{\ell=D/2+1}^\infty e^{-a\ell N} F_{\ell,D} = 
\sum_{m=0}^{D/2} (-1)^m B_{2m,D} \Li_{-2m}(e^{-aN}). 
\end{equation}

For the case $D=2$, we have $B_{2,2}$=1 and (\ref{eq:bound_periodic_strip})
becomes
\begin{equation}
|I_{N,2}-I| \le 4\pi M \frac{e^{-2aN}(3-e^{-aN})}{(1-e^{-aN})^3}.
\end{equation}
In Table~\ref{tab:b_coeff} we present $B_{2m,D}$ and $F_{D/2+1,D}$ for
$D=2,$~4, and~6.

\section{Example: Integral of a Periodic Complex Function}
\label{sec:example_periodic_complex}

\begin{figure}[htbp]
  \centering
  \includegraphics[width=0.7\columnwidth]{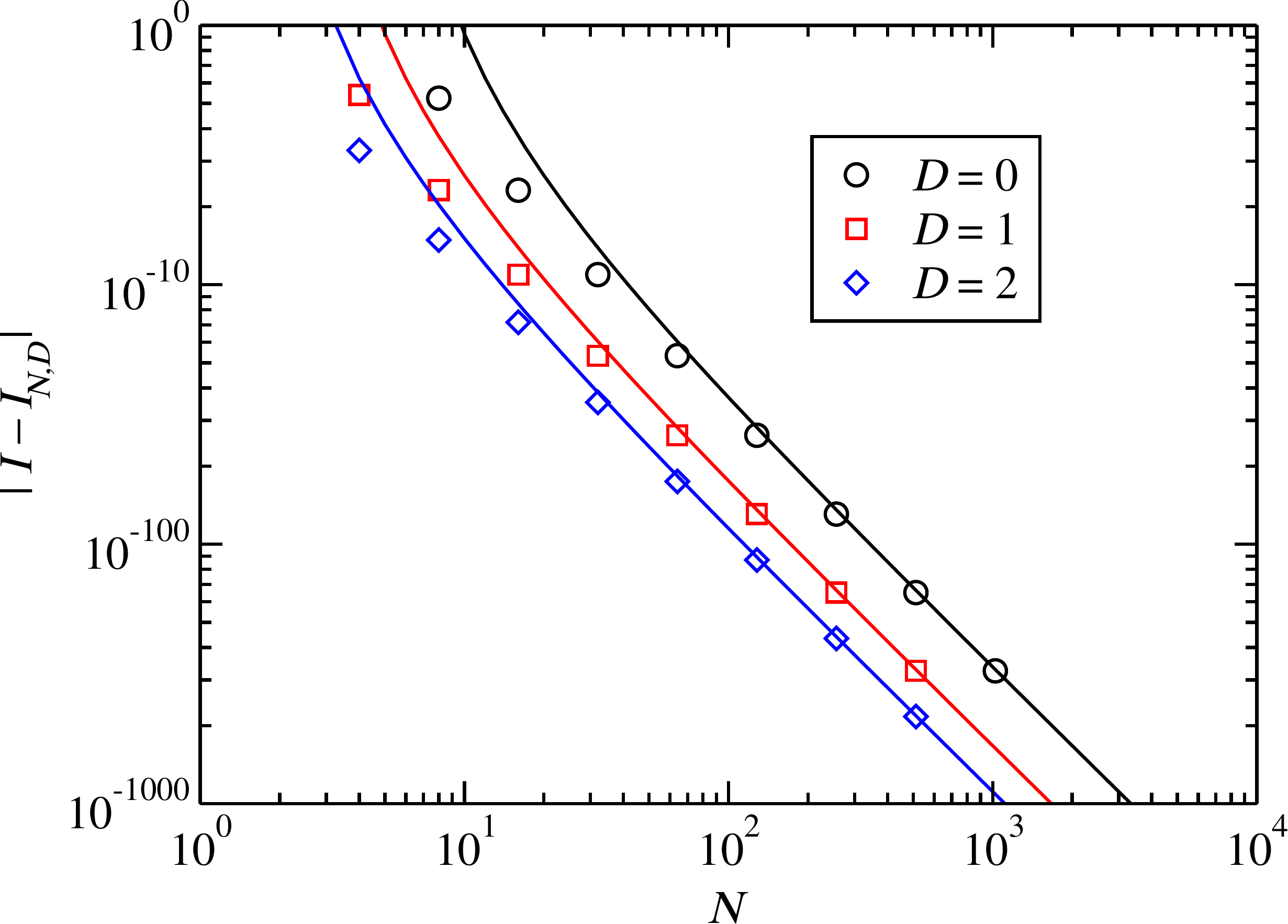}
  \caption{The actual (points) and predicted (curves) convergence
    of the generalized trapezoidal rule (\ref{eq:IND}) using
    the coefficients corresponding to Theorem~\ref{thm:periodic_restricted}
    given in Table~\ref{tab:a_coeff},
    for $v(\theta)$ given by
    (\ref{eq:v_ex_periodic_complex}) with $e^b=2$.}
  \label{fig:ex:periodic_complex}
\end{figure}

Here we present an example using a complex periodic function that
fulfills the requirements of Theorem~\ref{thm:periodic_restricted}:
\begin{equation} \label{eq:v_ex_periodic_complex}
v(\theta) = \frac{1}{e^b+e^{i\theta}},
\end{equation}
where $b$ is a positive real constant. This function has simple poles in the
lower half plane at $\theta=2\pi(j+1/2)-ib$, where $j$ is any integer.
We then have $0<a<b$, where $a$ defines the half plane in the conditions
of Theorem~\ref{thm:periodic_restricted}. For $\Im\,\theta>-a$, the
sharp upper bound on $|v(\theta)|$ is $M=(e^b-e^a)^{-1}$.
The error bound  may be optimized by choosing $a$ to minimize the leading
geometric term in (\ref{eq:bound_periodic_plane}), $2\pi Me^{-a(D+1)N}$.
Using calculus, one thus obtains
\begin{equation} \nonumber
a=b-\frac{1}{(D+1)N}+O(1/N^2), \quad N\rightarrow\infty \quad\text{and}
\end{equation}
\begin{equation}
|I_{N,D}-I| \le 2\pi e(D+1)N e^{-b[(D+1)N+1]} \left[1+O(1/N)\right],
  \quad N\rightarrow\infty .
\end{equation}
The actual convergence results and this bound are plotted in
Figure~\ref{fig:ex:periodic_complex}, for $e^b=2$.
The expected geometric convergence and improvement from including derivative
information are seen.
For this $v(\theta)$, the exact error can be calculated via
(\ref{eq:IND-I-Eell}), which results in
\begin{equation}
|I_{N,D}-I| = 2\pi e^{-b[(D+1)N+1]} \left[1+O(1/N)\right],
  \quad N\rightarrow\infty .
\end{equation}
We see that for large $N$, the error bound is a factor of $(D+1)Ne$
greater than the actual error.

\section{Example: Integral of a Periodic Real Function}
\label{sec:example_periodic_real}

\begin{figure}[htbp]
  \centering
  \includegraphics[width=0.7\columnwidth]{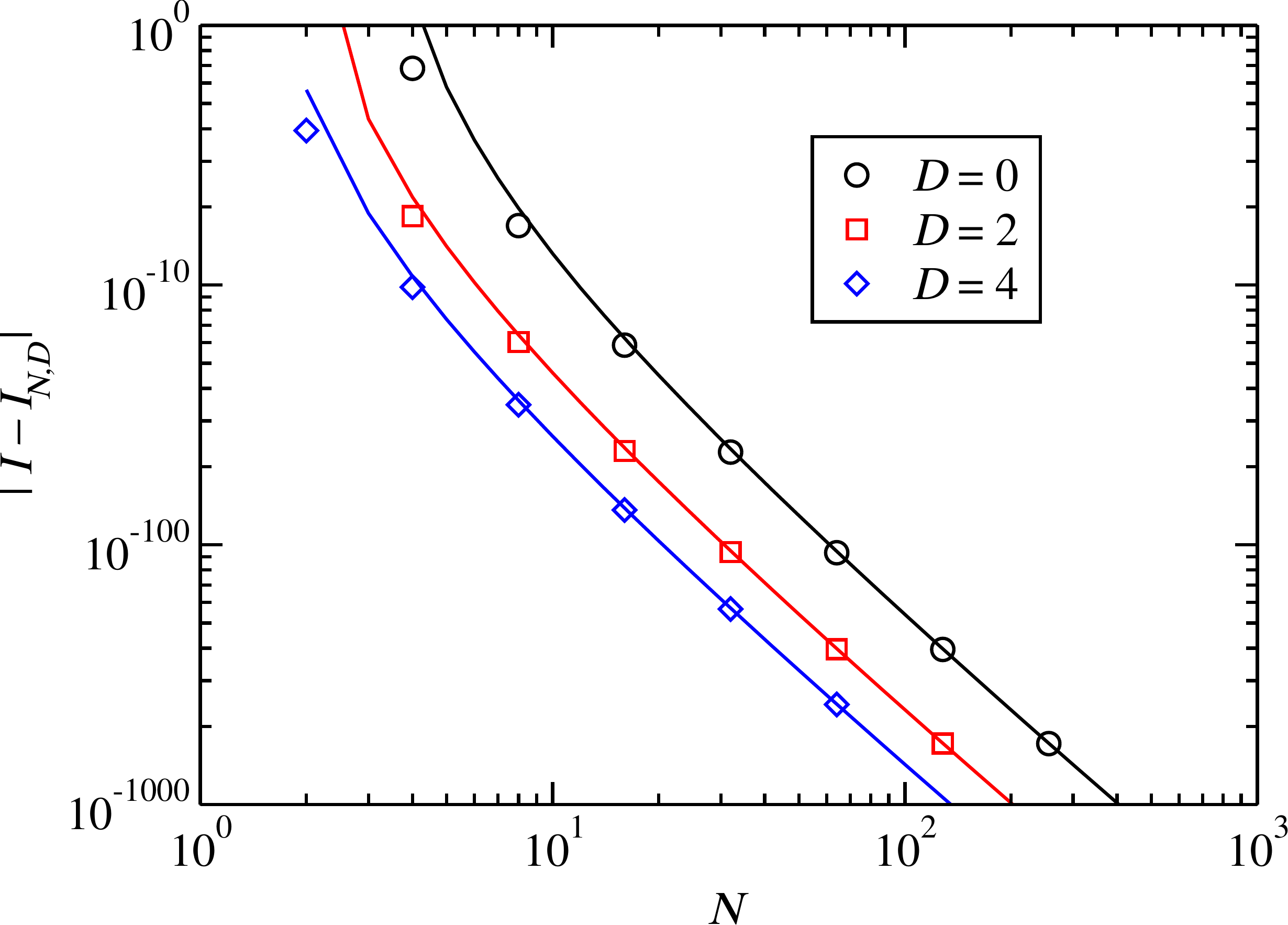}
  \caption{The actual (points) and predicted (curves) convergence
    of the generalized trapezoidal rule (\ref{eq:IND}) using
    the coefficients corresponding to Theorem~\ref{thm:periodic_strip}
    given in Table~\ref{tab:a_coeff},
    for $v(\theta)$ given by
    (\ref{eq:v_ex_periodic_real}).}
  \label{fig:ex:periodic_real}
\end{figure}

Here we present an example with a real integrand that
fulfills the requirements of Theorem~\ref{thm:periodic_strip}:
\begin{equation} \label{eq:v_ex_periodic_real}
v(\theta) = e^{\cos\theta},
\end{equation}
an example also considered by Trefethen and Weideman~\cite{Tre14}.
We first note the remarkable accuracy that can be achieved with just
a modest number of terms -- for example, $N=4$ and $D=4$ results in
\begin{equation}
I_{4,4} = \frac{\pi}{1024}\left(1101+553/e+474e\right) =
  7.9549265210781\ldots
\end{equation}
where the first 11 digits are correct.
In this case $v(\theta)$ is entire, with $|v(\theta)|$ unbounded
as $\Im\,\theta\rightarrow\infty$. 
The sharp bound on $|v(\theta)|$ in the strip $|\Im\,\theta|<a$
is $M=e^{\cosh a}$. The leading geometric term in the error bound
is $4\pi\binom{D+1}{D/2}Me^{-a(D/2+1)N}$, which may be minimized
using calculus, resulting in
\begin{equation} \nonumber
e^a=(D+2)N+O(1/N), \quad N\rightarrow\infty \quad\text{and}
\end{equation}
\begin{equation}
|I_{N,D}-I| \le 4\pi \binom{D+1}{D/2}
  \left[\frac{e}{(D+2)N}\right]^{(D/2+1)N} \left[1+O(1/N)\right],
  \quad N\rightarrow\infty .
\end{equation}
The actual convergence results and this bound are plotted in
Figure~\ref{fig:ex:periodic_real}, where
the expected geometric convergence and improvement from including derivative
information are seen.

\section{Integrals on the Real Line}
\label{sec:real_line}

Let $w$ be a real or complex function on the real line and consider the
definite integral
\begin{equation} \label{I_real_line}
I=\int_{-\infty}^\infty w(x) \, dx .
\end{equation}
The trapezoidal rule approximation for this integral is given by
\cite[(5.2)]{Tre14}
\begin{equation} \label{eq:trap_infinite}
I_h = h \sum_{j=-\infty}^{\infty} w(x_j) ,
\end{equation}
where $h>0$ and $x_j=jh$.

Assuming that $w$ is $D$-times differentiable, we define a generalized
trapezoidal rule approximation that takes into account derivative
information via
\begin{equation} \label{eq:IhD}
I_{h,D} = h \sum_{j=-\infty}^\infty \sum_{k=0}^D \left(\frac{h}{2\pi}\right)^k
  B_{k,D} \, w^{(k)}(x_j),
\end{equation}
where $D$ is the maximum derivative order included and
$B_{k,D}$ are constants with $B_{0,D}=1$.
We have assumed that $B_{k,D}$ is independent of $j$ and that no derivatives
are skipped in the sum over $k$, neither of which is a requirement.
The factor of $(h/2\pi)^k$ has been inserted for convenience, as it will
lead to $B_{k,D}$ being independent of $h$.
We observe that for $D=0$, the standard
trapezoidal rule (\ref{eq:trap_infinite}) is recovered.

\begin{theorem}
\label{thm:infinite_strip}
Suppose $w$ is analytic in the strip $|\Im \, x|<a$ for some $a>0$,
$w(x)\rightarrow 0$ uniformly as $|x|\rightarrow\infty$,
and for some $M$, it satisfies
\begin{equation}
\int_{-\infty}^\infty |w(x+ib)|dx \le M
\end{equation}
for all $b\in(-a,a)$. Further suppose that $D$ is a positive even integer,
$\ell$ and $m$ are integers with $1\le \ell,m\le D/2$,
$B_{0,D}=1$, $B_{2m-1,D}=0$, and
$B_{2m,D}$ are the solution to the Vandermonde system
\begin{equation} \label{eq:B2m_infinite}
\sum_{m=1}^{D/2} (-1)^{m}  \ell^{2m} B_{2m,D} = -1 .
\end{equation}
Then for $h>0$, $I_{h,D}$ as defined in (\ref{eq:IhD}) exists and 
\begin{subequations} \label{eq:boud_infinite_both}
\begin{equation} \label{eq:bound_infinite}
|I_{h,D}-I| \le \frac{2M}{(1-e^{-2\pi a/h})^{D+1}} \left| \sum_{\ell=D/2+1}^{D+1}
  (-1)^\ell \binom{D+1}{\ell} e^{-2\pi\ell a/h} \right| ,
\end{equation}
and for $h\rightarrow 0$
\begin{equation} \label{eq:bound_infinite_asymp}
|I_{h,D}-I| \le 2M \binom{D+1}{D/2}  e^{-2\pi (D/2+1) a/h}
  \left[ 1 + O(e^{-2\pi a/h}) \right] ,
\end{equation}
\end{subequations}
and the constant $2M$ is as small as possible.
\end{theorem}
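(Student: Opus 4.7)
The plan is to parallel the proof of Theorem~\ref{thm:periodic_strip} almost verbatim, replacing the Fourier series by the Fourier transform and the discrete orthogonality $\sum_j e^{i\ell\theta_j}$ by the Poisson summation formula. I would introduce $\hat w(\xi) = \int_{-\infty}^\infty w(x) e^{-i\xi x}\,dx$, so that $I = \hat w(0)$, and note that termwise differentiation brings out a factor $(i\cdot 2\pi\ell/h)^k$ upon Poisson summation; the convenience factor $(h/2\pi)^k$ in (\ref{eq:IhD}) is exactly what is needed for the $h$-dependence to cancel. Applied to $I_{h,D}$ this should yield
$$
I_{h,D} - I = \sum_{\ell=1}^\infty \sum_{k=0}^D B_{k,D}\bigl[(i\ell)^k \hat w(2\pi\ell/h) + (-i\ell)^k \hat w(-2\pi\ell/h)\bigr],
$$
an identity structurally identical to (\ref{eq:IND-restricted-strip}), with $c_{\pm\ell N}$ replaced by $\hat w(\pm 2\pi\ell/h)$ and the overall factor of $2\pi$ absent because $I$ equals $\hat w(0)$ rather than $2\pi c_0$.

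The necessary bound on the Fourier transform would come from analyticity and the integrability hypothesis by a contour shift argument: for $\xi > 0$ shift the horizontal contour in $\hat w(\xi)$ downward by $a' < a$, the vertical pieces vanishing by the uniform decay of $w$, which yields $|\hat w(\xi)| \le M e^{-a'\xi}$; letting $a'\nearrow a$ and handling $\xi<0$ symmetrically produces $|\hat w(\xi)|\le M e^{-a|\xi|}$, the direct analogue of (\ref{eq:bound_cl_strip}). With this in hand, the Vandermonde system (\ref{eq:B2m_infinite}) together with the vanishing of the odd coefficients forces the inner sum to collapse to $F_{\ell,D}$, annihilating the contributions of $1\le\ell\le D/2$ and leaving $I_{h,D}-I = \sum_{\ell=D/2+1}^\infty F_{\ell,D}[\hat w(2\pi\ell/h)+\hat w(-2\pi\ell/h)]$. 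Taking absolute values, using that all $F_{\ell,D}$ share a common sign for $\ell>D/2$, and invoking identity (\ref{eq:sum_F_ell}) with $e^{-aN}$ replaced by $e^{-2\pi a/h}$ should produce (\ref{eq:bound_infinite}); retaining only the $\ell=D/2+1$ term yields the asymptotic statement (\ref{eq:bound_infinite_asymp}). For sharpness of the constant $2M$, I would exhibit a family of functions whose Fourier transforms concentrate near $\pm (D/2+1)\cdot 2\pi/h$, e.g.\ the Gaussian-modulated cosines $w_\sigma(x) = 2e^{-\sigma x^2}\cos\bigl((D/2+1)(2\pi/h)x\bigr)$ with $\sigma\to 0^+$, and verify by direct computation that the ratio of $|I_{h,D}-I|$ to the right-hand side of (\ref{eq:bound_infinite_asymp}) tends to one.

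The principal obstacle I anticipate is the rigorous justification of Poisson summation and of the absolute convergence of the sum defining $I_{h,D}$. The hypothesis controls only the $L^1$-norm of $w$ along horizontal lines, not pointwise decay of $w$ or its derivatives along the real axis, so before the algebraic manipulation can proceed I would apply Cauchy's integral formula on small circles of radius slightly less than $a$ to deduce that $|w^{(k)}(x)|$ is bounded by a constant times the $L^1$-norm on a shifted contour, and then combine this with a contour shift to establish pointwise decay of $w^{(k)}$ at infinity sufficient for the sum over $j$ and the Poisson identity to converge absolutely. Once this bookkeeping is complete, the remainder of the argument is mechanical and follows Theorem~\ref{thm:periodic_strip} line by line.
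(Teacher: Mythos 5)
Your outline is sound and would, if the analytic bookkeeping were completed, yield the theorem; but it is a genuinely different route from the paper's. The paper does \emph{not} prove Theorem~\ref{thm:infinite_strip} by Poisson summation: it uses residue calculus, integrating $w_{h,D}$ against the kernel $m(x)=-\tfrac{i}{2}\cot(\pi x/h)$ over a rectangular contour, splitting the contour into $\Gamma_\pm$, expanding $(1-e^{\mp 2\pi i x/h})^{-1}$ as a geometric series on the shifted horizontal lines, and moving the derivatives onto the kernel by integration by parts. This reaches the same expression you derive (a sum over $\ell>D/2$ of $F_{\ell,D}$ times integrals bounded by $Me^{-2\pi\ell a/h}$) while only ever manipulating truncated sums $I^{[n]}_{h,D}$ and absolutely convergent contour integrals, so the existence of $I_{h,D}$ and the interchange of limits come out of the construction rather than having to be established in advance. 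The paper reserves the Fourier-transform/Poisson-summation identity for the sharpness discussion only, and explicitly remarks (citing Trefethen and Weideman) that deriving the quadrature rule itself by Poisson summation ``seems to require that a more stringent condition be placed on $w(x)$'' --- which is precisely the obstacle you flag at the end. Your proposed fix is on the right track but incomplete as stated: a pointwise Cauchy estimate gives boundedness of $w^{(k)}$, not summability over the lattice $\{jh\}$. The clean way to close this under the stated hypotheses is the sub-mean-value inequality over disjoint disks $D(jh,r)$ with $r<\min(a,h/2)$, which converts the $L^1$ bound on horizontal lines into $\sum_j|w^{(k)}(jh)|\le C(r,k)M<\infty$; combined with the exponential decay of $\hat w$ and continuity of the periodization, this legitimizes Poisson summation. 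Your Gaussian-modulated-cosine sharpness example also works, but note it requires the double limit $\sigma\to 0^+$ \emph{and} $a/h\to\infty$ (to kill the $2\cosh(\cdot)e^{-(\cdot)}\to 1$ factor), exactly parallel to the paper's Lorentzian example with $h=o(a)$. In short: correct strategy, different machinery; the paper's residue argument buys freedom from the Poisson-summation hypotheses that constitute the only nontrivial gap in your plan.
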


\begin{proof}
The proof is by residue calculus. We definite the auxiliary function
\begin{equation}
w_{h,D}(x) = \sum_{k=0}^D \left(\frac{h}{2\pi}\right)^k B_{k,D} \, w^{(k)}(x).
\end{equation}
The assumption that $w(x)\rightarrow 0$ uniformly as $|x|\rightarrow\infty$
implies by Cauchy integrals that the same holds true for
$w^{(k)}(x)$ and $w_{h,D}(x)$.
The function
\begin{equation}
m(x) = -\frac{i}{2}\cot\frac{\pi x}{h}
\end{equation}
has simple poles at $x=0, \pm h, \pm 2h,\ldots,$ all with residues equal
to $h/(2\pi i)$.
For convenience, we consider the sum in (\ref{eq:IhD}) to be
symmetric, from $-n$ to $n$ with $n\rightarrow\infty$.
Our arguments are trivially generalized to an arbitrary
sum from $n_-$ to $n_+$, with $n_-,n_+\rightarrow\infty$, as our reasoning
do not depend upon the symmetry of the sum.

The residue theorem thus implies that for any positive integer $n$
\begin{equation}
I_{h,D}^{[n]} = \int_\Gamma m(x)w(x) \, dx,
\end{equation}
where $I_{h,D}^{[n]}$ is the truncated form of the generalized trapezoidal
rule (\ref{eq:IhD}) and the clockwise contour $\Gamma$ encircles the poles
in $[-nh,nh]$. We take $\Gamma$ to be the rectangular contour with vertices
$\pm(n+\frac{1}{2})h+ia'$ and $\pm(n+\frac{1}{2})-ia'$ for any $a'$ with
$0<a'<a$. This contour is depicted in Figure~5.1 of
Trefethen and Weideman~\cite{Tre14}.
We can also write using Cauchy's theorem that
\begin{equation} \label{eq:two_forms}
\int_{-(n+\frac{1}{2})h}^{(n+\frac{1}{2})h} w_{h,D}(x)\,dx = \int_{\Gamma_-} w_{h,D}(x)\,dx
  = -\int_{\Gamma_+} w_{h,D}(x) \, dx ,
\end{equation}
where $\Gamma_-$ and $\Gamma_+$ are the segments of $\Gamma$ with
$\Im\, x\le 0$ and $\Im\, x\ge 0$, respectively.
Using the average of these two forms of (\ref{eq:two_forms}) we can write
\begin{align} \label{eq:IhD-I-n}
h &\sum_{j=-n}^n w_{h,D}(jh) -\int_{-(n+\frac{1}{2})h}^{(n+\frac{1}{2})h} w_{h,D}(x)\,dx \\
  & = -\frac{1}{2}\int_{\Gamma_-} (1+i\cot\frac{\pi x}{h} )w_{h,D}(x) \, dx
      +\frac{1}{2}\int_{\Gamma_+} (1-i\cot\frac{\pi x}{h} )w_{h,D}(x) \, dx
  \nonumber \\
  & = -\int_{\Gamma_-} \frac{w_{h,D}(x)}{1-e^{2\pi ix/h}} \, dx
      +\int_{\Gamma_+} \frac{w_{h,D}(x)}{1-e^{-2\pi ix/h}} \, dx . \nonumber
\end{align}
In the limit $n\rightarrow\infty$, the contributions of the vertical legs of
the contours $\Gamma_\pm$ vanish. This can be seen by considering
$|1+\exp(\mp 2\pi ix/h)| \ge 2$ on the vertical legs of $\Gamma_\pm$
and the decay properties of $w_{h,D}(x)$.
We also have
\begin{align}
\int_{-\infty}^\infty w_{h,D}(x)\,dx & = I+\sum_{k=1}^D
  \left(\frac{h}{2\pi}\right)^k B_{k,D} \int_{-\infty}^\infty w^{(k)}(x) \, dx \\
  &= I, \nonumber
\end{align}
since for $k\ge 1$ the integrals on the right-hand side can be evaluated via
integration by parts and the results vanish do to the decay properties
of $w^{(k-1)}(x)$.
In the limit $n\rightarrow\infty$, (\ref{eq:IhD-I-n}) thus becomes
\begin{equation} \label{eq:IhD-I}
I_{h,D}-I = -\int_{-\infty-ia'}^{\infty-ia'} \frac{w_{h,D}(x)}{1-e^{2\pi i x/h}} \, dx
           -\int_{-\infty+ia'}^{\infty+ia'} \frac{w_{h,D}(x)}{1-e^{-2\pi i x/h}} \, dx .
\end{equation}

We define
\begin{equation}
f_{\pm}(x) = \frac{-1}{1-e^{\mp2\pi ix/h}} = \sum_{\ell=1}^\infty
  e^{\pm 2\pi i \ell x/h},
\end{equation}
where the geometric series representations are absolutely convergent
along the respective paths of integration in (\ref{eq:IhD-I}).
We also have
\begin{equation}
\int_{-\infty\pm ia'}^{\infty\pm ia'} w^{(k)}(x)\, f_{\pm}(x) \, dx =
  (-1)^k \int_{-\infty\pm ia'}^{\infty\pm ia'} w(x)\, f_{\pm}^{(k)}(x) \, dx ,
\end{equation}
using integration by parts. The surface terms vanish due to the decay
properties of $w^{(k)}(x)$ and the fact that $f_{\pm}(x)$ and its derivatives
are bounded as $x\rightarrow \pm\infty-ia'$ and $x\rightarrow \pm\infty+ia'$.
We can now write
\begin{align}
I_{h,D}-I &= \int_{-\infty-ia'}^{\infty-ia'} \sum_{\ell=1}^\infty \sum_{k=0}^D
  B_{k,D} ( i \ell)^k e^{-2\pi i \ell x/h} w(x) \, dx \\ \nonumber
&+ \int_{-\infty+ia'}^{\infty+ia'} \sum_{\ell=1}^\infty \sum_{k=0}^D
  B_{k,D} (-i \ell)^k e^{2\pi i \ell x/h} w(x) \, dx.
\end{align}
The bound on $w(x)$ implies that
\begin{equation} \label{eq:bound_int_we}
\left| \int_{-\infty\pm ia'}^{\infty\pm ia'} e^{\pm 2\pi i \ell x/h} w(x) \, dx \right|
  \le M e^{-2\pi \ell a/h}.
\end{equation}
In order to minimize $|I_{h,D}-I|$ in a certain sense, the $B_{k,D}$ will
be chosen to eliminate as many low-order exponential terms in the sums
over $\ell$ as possible.
To nullify both terms a particular $\ell$ value, we require
\begin{equation}
\sum_{k=0}^D ( i\ell)^k B_{k,D} = 0 \quad\quad \text{and} \quad\quad
\sum_{k=0}^D (-i\ell)^k B_{k,D} = 0 .
\end{equation}
Applying this condition for $1\le\ell\le D/2$ matches the number of
unknown $B_{k,D}$ to the number of equations.
These equations are seen to be the same as (\ref{eq:vander-restricted-strip})
and the coefficients $B_{k,D}$ are thus identical to
those found previously in Theorem~\ref{thm:periodic_strip}, with
$B_{2m-1,D}=0$ and $B_{2m,D}$ given by (\ref{eq:B2m_infinite})
for $1\le m\le D/2$.
The bound (\ref{eq:bound_int_we}) then implies
\begin{equation}
|I_{h,D}-I| \le 2M \left| \sum_{\ell=D/2+1}^\infty e^{-2\pi\ell a/h}
  F_{\ell,D} \right|,
\end{equation}
where $F_{\ell,D}$ is defined by (\ref{eq:F_ell_binomial}) and we have
used the fact that all $F_{\ell,D}$ have the same sign for $\ell>D/2$ to
justify placing the absolute value outside the summation.
This equation immediately leads to the bounds (\ref{eq:bound_infinite})
and (\ref{eq:bound_infinite_asymp}).

To show the sharpness of the constant $2M$ in the bound, it is helpful
to employ the Fourier transform of $w(x)$,
\begin{equation}
\hat{w}(\xi) = \frac{1}{2\pi}\int_{-\infty}^\infty
  e^{-i\xi x} w(x) \, dx .
\end{equation}
Applying the Fourier transform to $w_{h,D}(x)$ and using the
Poisson summation formula~\cite[6.10.IV]{Hen77}, one obtains
\begin{equation}
I_{h,D}-I = 2\pi\sum_{\ell=D/2+1}^\infty F_{\ell,D}
 \left[ \hat{w}(2\pi\ell/h) + \hat{w}(-2\pi\ell/h)\right] ,
\end{equation}
which is the analog of (\ref{eq:IND-I-Fell}). For the function
\begin{equation}
w(x)=\frac{\cos[2\pi(D/2+1)x/h]}{x^2+L^2}, \quad L>0,
\end{equation}
we have
\begin{equation}
\hat{w}(\xi) = \frac{1}{2L}\left\{\begin{array}{ll}
  \cosh[2\pi(D/2+1)L/h]e^{-|\xi|L} & |\xi| \ge 2\pi(D/2+1)/h \\[0.3em]
  e^{-2\pi(D/2+1)L/h} \cosh(\xi L) & \text{otherwise}
  \end{array} \right.
\end{equation}
and
\begin{align} \label{eq:IhD-exact}
I_{h,D}-I &= \frac{2\pi}{L} \cosh[2\pi(D/2+1)L/h]
  \sum_{\ell=D/2+1}^\infty F_{\ell,D} \, e^{-2\pi\ell L/h} \\ \nonumber
&\sim \frac{\pi}{L} (-1)^{D/2} \binom{D+1}{D/2}, \quad h \rightarrow 0.
\end{align}
For any $a$ with $0<a<L$,
\begin{equation}
\int_{-\infty}^\infty |w(x\pm ia)|\,dx \le \cosh[2\pi(D/2+1)a/h] \, J(a),
\end{equation}
where
\begin{equation}
J(a)=  \int_{-\infty}^\infty \frac{dx}{\sqrt{(x^2-a^2+L^2)^2 + 4a^2x^2}}
  = \frac{\pi}{L}\left[1 + \frac{a^2}{4L^2} + O(a^4) \right] .
\end{equation}
In the limit that $h,a\rightarrow 0$ with $h=o(a)$, the bounds
(\ref{eq:bound_infinite}) and (\ref{eq:bound_infinite_asymp})
are seen to be asymptotic to the exact result (\ref{eq:IhD-exact}).
\end{proof}

With the inclusion of derivative information the error of the trapezoidal
rule is seen to be improved from $|I_{h,D}-I|=O(e^{-2\pi a/h})$ to
$O(e^{-2\pi(D/2+1)a/h})$ as $h\rightarrow 0$.
The weights of the derivatives in the quadrature rule are the same as
those found in Theorem~\ref{thm:periodic_strip} for a periodic function
analytic within a strip and are given in Table \ref{tab:b_coeff}
for $D=2$, 4, and 6.

This quadrature rule appears to have first
been given by Kress in 1972~\cite{Kre72b}.
Our error bound is somewhat tighter, as Kress used an estimate analogous
to (\ref{eq:kress_approx}) in deriving his bound.
Other discussions of this quadrature rule are given in
Olivier and Rahman~\cite{Oli86} and Dryanov~\cite{Dry90,Dry92}.
The latter references also consider the case when derivatives are skipped
in the summation over $k$ in (\ref{eq:IhD}).
The error bound (\ref{eq:bound_infinite}) agrees with the result of
Dryanov~\cite[(3.11)]{Dry92}.

As alluded to in the above discussion of the sharpness of the error bound,
this quadrature rule may also be deduced using the Fourier transform
and Poisson summation formula.
This is also the approach taken in Ref.~\cite{Dry90}.
As noted by Trefethen and Weideman~\cite{Tre14}, this method seems to
require that a more stringent condition be placed on $w(x)$.

Bailey and Borwein~\cite{Bai06} have derived an error estimate for the
standard trapezoidal rule (\ref{eq:trap_infinite}) from the
Euler-Maclaurin formula. For an infinite integration interval, their
equation~(3) in our notation reads
\begin{equation}
\mathcal{E}_2(h,m) = h(-1)^{m+1}\left(\frac{h}{2\pi}\right)^{2m}
  \sum_{j=-\infty}^\infty
  w^{(2m)}(x_j)
\end{equation}
and the corresponding bound on the remaining error is given as
\begin{equation} \label{eq:bound_bailey}
\begin{split}
|I_h+\mathcal{E}_2&(h,m)-I| \le \\ & 2[\zeta(2m)+(-1)^{2m}\zeta(2m+2)]
  \left(\frac{h}{2\pi}\right)^{2m}
  \int_{-\infty}^\infty \left|w^{(2m)}(x)\right|\, dx ,
\end{split}
\end{equation}
where $\zeta$ is the Riemann zeta function.
They note that the estimate $\mathcal{E}_2(h,m)$ is ``very accurate.''
The quantity $\mathcal{E}_2(h,m)$ corresponds exactly in our formalism to the
derivative correction resulting from taking all
$B_k=0$ in (\ref{eq:IhD}), except $B_0=1$ and $B_{2m}=(-1)^{m+1}$,
which nullifies the leading order term in the error, resulting in
\begin{equation} \label{eq:bound_bailey_crb}
|I_h+\mathcal{E}_2(h,m)-I| \le 2M\left| \Li_0(e^{-2\pi a/h}) +
  (-1)^{m+1}\Li_{-2m}(e^{-2\pi a/h}) \right| ,
\end{equation}
and for $h\rightarrow 0$
\begin{equation} \label{eq:bound_bailey_crb_asymp}
|I_h+\mathcal{E}_2(h,m)-I| \le 2M (2^{2m}-1)e^{-4\pi a/h}
  \left[1+O(e^{-2\pi a/h})\right] .
\end{equation}
The formulations of the respective error bounds (\ref{eq:bound_bailey})
and (\ref{eq:bound_bailey_crb}) are observed to be quite different.
The bounds based on our derivative-free formalism
clearly show that including the derivative information leads to
an improvement in the geometric rate of convergence.
Bailey and Borwein noted that $\mathcal{E}_2(h,1)$ was always more accurate
than $\mathcal{E}_2(h,m)$ with $m>1$,
an observation that is likely explained by the factor of $(2^{2m}-1)$ in
the bound (\ref{eq:bound_bailey_crb_asymp}).

We conclude this section with the real-line analog of
Theorem~\ref{thm:periodic_restricted}, which is given without proof.
In practice, its applicability is limited and it is thus primarily
included for completeness.

\begin{theorem}
\label{thm:infinite_restricted}
Suppose $w$ is analytic in the half-plane $\Im\, x >-a$ for some $a>0$,
$w(x)\rightarrow 0$ uniformly as $|x|\rightarrow\infty$,
and for some $M$, it satisfies
\begin{equation}
\int_{-\infty}^\infty |w(x+ib)|dx \le M
\end{equation}
for all $b> -a$.
Further suppose that $D$ is a positive integer, $k$
is an integer with $0\le k\le D$, and
\begin{equation}
i^k A_{k,D} = \frac{(-1)^D}{D!} s(D+1,k+1) ,
\end{equation}
where $s(D+1,k+1)$ are the Stirling numbers of the first kind.
Then for $h>0$, $I_{h,D}$ as defined in (\ref{eq:IhD}) exists and
\begin{equation} \label{infite_half_plane}
|I_{N,D}-I| \le \frac{M}{(e^{2\pi a/h}-1)^{D+1}}
\end{equation}
and the constant $M$ is as small as possible.
\end{theorem}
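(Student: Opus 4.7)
The plan is to adapt the residue-calculus argument used for Theorem~\ref{thm:infinite_strip}, exploiting the fact that we now have analyticity on an entire half-plane rather than merely a strip. The starting point is the identity (\ref{eq:IhD-I}),
\begin{equation*}
I_{h,D}-I = -\int_{-\infty-ia'}^{\infty-ia'} \frac{w_{h,D}(x)}{1-e^{2\pi i x/h}}\, dx - \int_{-\infty+ia'}^{\infty+ia'} \frac{w_{h,D}(x)}{1-e^{-2\pi i x/h}}\, dx,
\end{equation*}
whose derivation needs only analyticity in a neighborhood of the rectangle used there together with uniform decay of $w$ and its derivatives at $\pm\infty$; both hold under the hypotheses of Theorem~\ref{thm:infinite_restricted}. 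The crucial new observation is that the upper integral can be pushed to $\Im x = R$ for arbitrarily large $R>0$ without crossing a singularity: on that line $|1-e^{-2\pi i x/h}|\ge e^{2\pi R/h}-1$, while $\int|w_{h,D}(t+iR)|\,dt$ stays bounded in $R$ (by combining the integrability hypothesis with Cauchy-integral bounds on the derivatives $w^{(k)}$). Hence that integral vanishes as $R\to\infty$, leaving only the lower-contour term.

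Next I would expand the remaining kernel as the absolutely convergent geometric series $1/(1-e^{2\pi i x/h}) = -\sum_{\ell=1}^{\infty} e^{-2\pi i\ell x/h}$, valid on $\Im x = -a' < 0$, then substitute the definition of $w_{h,D}$ and integrate by parts in $k$ (surface terms vanishing by the assumed decay). Each derivative contributes a factor $(i\ell)^k$, so the coefficients reassemble into precisely the polynomial $E_{\ell,D}=\sum_{k=0}^D A_{k,D}(i\ell)^k$ from the proof of Theorem~\ref{thm:periodic_restricted}, giving
\begin{equation*}
I_{h,D}-I = \sum_{\ell=1}^\infty E_{\ell,D} \int_{-\infty-ia'}^{\infty-ia'} e^{-2\pi i\ell x/h}\, w(x)\, dx.
\end{equation*}
The Vandermonde choice (\ref{eq:Ak_theorem}) already annihilates $E_{\ell,D}$ for $1\le\ell\le D$, and for $\ell\ge D+1$ equation (\ref{eq:E_binomial}) gives $|E_{\ell,D}|=\binom{\ell-1}{D}$. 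Each shifted integral is bounded by $M\,e^{-2\pi\ell a'/h}$ using the hypothesis $\int|w(x-ia')|\,dx\le M$; letting $a'\to a$ and summing yields
\begin{equation*}
|I_{h,D}-I| \le M\sum_{\ell=D+1}^\infty \binom{\ell-1}{D}\, e^{-2\pi\ell a/h} = \frac{M}{(e^{2\pi a/h}-1)^{D+1}},
\end{equation*}
the binomial-to-geometric collapse being identical to the one executed at the end of the proof of Theorem~\ref{thm:periodic_restricted}.

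To demonstrate that the constant $M$ is optimal, I would mimic the sharpness example at the end of the proof of Theorem~\ref{thm:infinite_strip}, replacing the cosine there by a single exponential so that $\hat w$ is concentrated at $\xi=2\pi(D+1)/h$: e.g.\ $w(x) = e^{2\pi i(D+1)x/h}/(x+iL)^{2}$ with $L>a$, which is analytic in $\Im x>-L$, satisfies the integrability bound, and admits an exact residue evaluation of $I_{h,D}-I$ that matches the bound asymptotically as $h,a\to 0$ with $h=o(a)$. The main obstacle I expect is the rigorous justification that the upper-half-plane contribution vanishes as $R\to\infty$: although intuitive, this requires transferring the integrability hypothesis from $w$ to its derivatives uniformly in the shift parameter, and then carefully disposing of both the horizontal top and the vertical sides of the enlarging rectangle.
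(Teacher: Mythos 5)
The paper states Theorem~\ref{thm:infinite_restricted} explicitly ``without proof,'' so there is no argument of the author's to compare yours against; I can only assess your proposal on its own terms. Your derivation of the error bound itself is sound and is the natural hybrid of the two proofs the paper does give: identity (\ref{eq:IhD-I}) holds verbatim since the rectangle used there lies in $\Im x > -a$; pushing the upper contour to $\Im x = R$ and using $|1-e^{-2\pi i x/h}|\ge e^{2\pi R/h}-1$ together with the Cauchy estimate $\int|w^{(k)}(t+iR)|\,dt\le k!\,M/\rho^k$ (uniform in $R\ge 0$ for fixed $\rho<a$) does kill that term, exactly as the condition $c_\ell=0$ for $\ell<0$ does in Theorem~\ref{thm:periodic_restricted}; and the geometric-series expansion, integration by parts, reassembly into $E_{\ell,D}$, the bound $M e^{-2\pi\ell a'/h}$ with $a'\to a$, and the binomial sum $\sum_{\ell\ge D+1}\binom{\ell-1}{D}q^{\ell}=(q^{-1}-1)^{-(D+1)}$ all check out and reproduce (\ref{infite_half_plane}).

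The sharpness argument, however, has a genuine gap: your proposed extremal function $w(x)=e^{2\pi i(D+1)x/h}/(x+iL)^2$ does not work. A residue computation gives
\begin{equation*}
\hat{w}(\xi)=\begin{cases} -(\xi-\xi_1)\,e^{-(\xi-\xi_1)L}, & \xi>\xi_1,\\ 0, & \xi\le\xi_1,\end{cases}
\qquad \xi_1=\frac{2\pi(D+1)}{h},
\end{equation*}
so $\hat{w}$ vanishes \emph{exactly} at the critical frequency $\xi_1$. In the Poisson-summation form of the error, $I_{h,D}-I=2\pi\sum_{\ell\ge D+1}E_{\ell,D}\,\hat{w}(2\pi\ell/h)$, the leading $\ell=D+1$ term is therefore zero, the actual error is $O\bigl(h^{-1}e^{-2\pi L/h}\bigr)$, and the ratio of actual error to the bound $M/(e^{2\pi a/h}-1)^{D+1}\sim \pi/(L-a)$ tends to $0$ rather than $1$ in any reasonable limit. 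The obstruction is structural, not an unlucky choice: the hypotheses force $\hat{w}$ to be continuous (Riemann--Lebesgue) and supported on $[0,\infty)$, so unlike the periodic case one cannot place a Fourier ``point mass'' at $\xi_1$, and a single pole of order two below the axis inevitably produces a transform with a zero at the carrier (this is also why $I=2\pi\hat{w}(0)=0$ for every admissible $w$, consistent with the paper's remark that the theorem's applicability is limited). Establishing that $M$ is optimal requires a family of admissible functions whose transforms are nonnegative bumps concentrated just above $\xi_1$ and nearly saturating $2\pi|\hat{w}(\xi_1)|\le M e^{-a\xi_1}$, followed by a limit in which the bump narrows; your proposal as written does not supply this.
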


\section{Large \texorpdfstring{$D$}{D} limit of the coefficients}
\label{sec:Dinf}

It was noted above in (\ref{eq:A_1}) that the $A_{k,D}$ coefficients
diverge as $D\rightarrow\infty$.
This is not the case for the coefficients $B_{2m,D}$.
Considering $F_{\ell,D}$ to be an analytic function of $\ell$,
(\ref{eq:F_ell}) becomes
\begin{equation}
F_{\ell,D\rightarrow\infty} = \prod_{m=1}^{\infty}\left[1-(\ell/m)^2\right] =
 \frac{\sin \ell\pi}{\ell\pi} =
  \sum_{m=0}^\infty (-1)^m \frac{(\ell\pi)^{2m}}{(2m+1)!}
\end{equation}
where Euler's product formula and the Taylor series for
$(\sin\ell\pi)/(\ell\pi)$ have both been utilized.
The coefficients can now be read off using (\ref{eq:F_ell}):
\begin{equation} \label{B_infinity}
B_{2m,D\rightarrow\infty} = \frac{\pi^{2m}}{(2m+1)!}.
\end{equation}
The coefficients $B_{2m,D}$ thus approach fixed values as
$D\rightarrow\infty$.
We note in passing that this result provides identities for the
infinite sums associated the $D\rightarrow\infty$ limits of
$B_{2m,D}$ with $m$ fixed. For example, (\ref{eq:sum_inverse_squared})
becomes the well-known sum $B_{2,D\rightarrow\infty} = \pi^2/6$.

The $D\rightarrow\infty$ limit can also be studied by considering
integrals on the real line\footnote{A similar analysis could also be done for
periodic functions analytic within a strip.}.
Assuming that $w(x)$ is analytic within the strip $|\Im\, x|\le h/2$
the integral (\ref{I_real_line}) may be written using Taylor series
expansions around the quadrature points as
\begin{align}
I &= \sum_{j=-\infty}^{\infty} \int_{x_j-h/2}^{x_j+h/2}
  \sum_{k=0}^\infty \frac{(x-x_j)^k}{k!}w^{(k)}(x_j)
  \, dx \\ \label{eq:IDinf}
  &= h \sum_{j=-\infty}^{\infty} \sum_{m=0}^\infty \left(\frac{h}{2}\right)^{2m}
       \frac{w^{(2m)}(x_j)}{(2m+1)!} .
\end{align}
By comparison with (\ref{eq:IhD}), we find $B_{2m,D\rightarrow\infty}$ as given by
(\ref{B_infinity}) and $B_{2m+1,D\rightarrow\infty}=0$.

Finally, we will consider the error terms
of Theorems~\ref{thm:periodic_strip} and~\ref{thm:infinite_strip}
in the $D\rightarrow\infty$ limit. Using Stirling's approximation
for the factorials,
\begin{equation} \label{eq:Dinf}
\binom{D+1}{D/2} \sim \frac{2^{D+2}}{\sqrt{2\pi D}}, \quad D\rightarrow\infty.
\end{equation}
For the case of Theorem~\ref{thm:periodic_strip},
the bound (\ref{eq:bound_periodic_strip_asymp}) indicates
\begin{equation}
\lim_{D\rightarrow\infty} |I_{N,D}-I| \rightarrow 0, \quad N>\frac{2\log2}{a}
\end{equation}
and for Theorem~\ref{thm:infinite_strip},
the bound (\ref{eq:bound_infinite_asymp}) shows
\begin{equation} \label{eq:lim_IhD}
\lim_{D\rightarrow\infty} |I_{h,D}-I| \rightarrow 0, \quad h<\frac{\pi a}{\log2} .
\end{equation}
In both cases, the convergence is geometric.
We also note that the requirement $h<\pi a/\log 2$ for (\ref{eq:lim_IhD})
is less restrictive than $h<2a$ which was assumed in the preceding paragraph.

\section{Other Approaches to Derivative Corrections}
\label{sec:other_approaches}

Here we discuss briefly two other approaches to derivative corrections to the
trapezoidal rule on the real line. They have a logical underpinning, but
are not optimal. Explicit error bounds will not be
derived, but it is clear that the improvement for these approaches scales
as a power of $h$, rather rather than exponentially.
In the appropriate limits, these methods will approach
Theorem~\ref{thm:infinite_strip}.
Here, we define the quadrature rule to be
\begin{equation} \label{eq:I_G}
I_G =  h \sum_{j=-\infty}^\infty \sum_{k=0}^D h^k \, G_{k} \, w^{(k)}(x_j) ,
\end{equation}
i.e., (\ref{eq:IhD}) but without the factors of $2\pi$.

One approach is to simply truncate the Taylor series expansion in
(\ref{eq:IDinf}), which results in
\begin{equation}
G_{k} = \left\{ \begin {array}{ll}
  [ 2^k(k+1)! ]^{-1} & \mbox{$k$ even} \\
  0 & \mbox{$k$ odd} \end{array} \right. .
\end{equation}
As noted above in section~\ref{sec:Dinf} , this rules does approach
Theorem~\ref{thm:infinite_strip} in the limit that $D\rightarrow\infty$,
i.e., when the full Taylor series is utilized.

Another approach is based upon interpolating polynomials.
We consider $2N$ points with equal spacing $h$.
The values of $w(x)$ and its first $D$ derivatives at the $2N$ points
can be described by a unique polynomial of degree $P=2N(D+1)-1$,
which is a particular implementation of the Hermite interpolating polynomial.
Rather than determine the polynomial coefficients, we will work directly
with the coefficients of the quadrature rule.
Assuming the points to be centered about $x=0$, a quadrature rule
for the integral between the two central points may be written as
\begin{equation} \label{eq:stencil}
\begin{split}
\int_{-h/2}^{h/2} w(x)\, dx = h\sum_{i=1}^N & \sum_{k=0}^D h^k \left[
  g^-_{ik} w^{(k)}(-\frac{2i-1}{2}h) + \right. \\
  & \left. g^+_{ik} w^{(k)}(\frac{2i-1}{2}h) \right] .\,
\end{split}
\end{equation}
where the $g^\pm_{ik}$ are unknown coefficients.
Since the monomials $x^p$ with $0\le p\le P$ form a linearly
independent and complete basis for all polynomials up to the degree
of the desired interpolating polynomial, the unknown coefficients may
be determined by requiring that that the quadrature rule evaluates
these monomials exactly~\cite{Bur12}:
\begin{equation}
\begin{split}
\int_{-h/2}^{h/2} x^p \, dx =  h\sum_{i=1}^N & \sum_{k=0}^{\min(D,p)} h^k \left[
  g^-_{ik} \frac{p!}{(p-k)!} \left(-\frac{2i-1}{2}h\right)^{p-k} + \right. \\
  & \left. g^+_{ik} \frac{p!}{(p-k)!} \left(\frac{2i-1}{2}h\right)^{p-k}
  \right] .
\end{split}
\end{equation}
Since the integral on the left-hand side vanishes when $p$ is odd, we have
\begin{equation} \label{eq:aik_minus}
g^-_{ik}=(-1)^k g^+_{ik} ,
\end{equation}
and for $p$ even
\begin{equation} \label{eq:g_plus}
\frac{1}{p+1} = 2 \sum_{i=1}^{N} \sum_{k=0}^{\min(D,p)} g^+_{ik}
  \frac{p!}{(p-k)!} (2i-1)^{p-k} 2^k .
\end{equation}
This linear system may be solved for $g^+_{ik}$.
A trapezoidal rule for the real line may then be derived by building
up a composite rule using (\ref{eq:stencil}) as stencil which is translated
as needed to integrate each subinterval.
This procedure results in
\begin{equation}
G_{k} = \sum_{i=1}^N g^-_{ik}+g^+_{ik},
\end{equation}
where $G_{k}$ is defined in (\ref{eq:I_G}) and is understood to
depend on $N$ and $D$.
For $k$ odd, $G_{k}$ vanishes because of (\ref{eq:aik_minus}).
For $k=0$, (\ref{eq:g_plus}) with $p=0$ gives $G_0=1$.
The results for $G_{k}$ for $D=2$ and~4 are shown for a range of
$N$ in Table~\ref{tab:poly_interp}.
It should be noted that the linear system (\ref{eq:g_plus}) is poorly
conditioned and must be solved carefully; we utilized exact rational
arithmetic for calculating $G_{k}$.

\begin{table}[tp]
{\footnotesize
  \caption{The coefficients $G_{k}$ for $D=2$, $D=4$, and selected $N$
    values. The last line gives the $B_{k,D}/(2\pi)^k$ values.
    The final digits are rounded. }
\label{tab:poly_interp}
\begin{center}
\begin{tabular}{c|c|cc}
    & $D=2$ & \multicolumn{2}{c}{$D=4$} \\
$N$ & $G_{2}$ & $G_{2}$ & $G_{4}$ 
\rule[-0.5em]{0pt}{0.5em} \\ \hline
\rule{0pt}{1.0em}%
  1 & 0.01666667 &   0.02777778 &   0.00006614 \\
  2 & 0.02239658 &   0.02980321 &   0.00011332 \\
  3 & 0.02426698 &   0.03068087 &   0.00013553 \\
  4 & 0.02493071 &   0.03112776 &   0.00014685 \\
  6 & 0.02527042 &   0.03149554 &   0.00015617 \\
  8 & 0.02532091 &   0.03160842 &   0.00015903 \\
 10 & 0.02532879 &   0.03164473 &   0.00015995 \\
 15 & 0.02533028 &   0.03166164 &   0.00016037 \\
 20 & 0.02533030 &   0.03166278 &   0.00016040 \\ \hline
$\rightarrow\infty$  &  0.02533030 & 0.03166287 & 0.00016041 \\ \hline
\end{tabular}
\end{center}
}
\end{table}

The last line of Table~\ref{tab:poly_interp} provides $B_{k,D}/(2\pi)^k$,
the optimal values from Theorem~\ref{thm:infinite_strip}.
It is seen that as $N$ increases, $G_{k}$ approaches these optimal values.
This result is not surprising, since the large-$N$ limit of polynomial
interpolation without derivatives is cardinal or sinc
interpolation~\cite{Whi15}, which with the inclusion of derivatives
generalizes to cardinal Hermite interpolation~\cite{Kre72b},
which in turn can be used to derive the optimal quadrature formulas
given here~\cite{Kre72b}.
Although we have not proven that the large-$N$ limit of $G_{k}$ is
$B_{k,D}/(2\pi)^k$, it is very likely to be the case and is observed
in practice.

\section{Conclusions}
\label{sec:conclusions}

Trapezoidal rules including derivative information have been derived
for periodic integrands or for integrals over the entire real line,
for functions which are analytic in a half plane or within a strip
including the path of integration.
The error bounds for the various cases, (\ref{eq:bound_periodic_plane}),
(\ref{eq:bound_period_strip_both}), (\ref{eq:boud_infinite_both}), and
(\ref{infite_half_plane}), are seen all seen to have similar structure. 
The quadrature rules converge geometrically as both the number of
quadrature points and number of included derivatives are increased.
Generally speaking, the inclusion of additional quadrature points, or
additional derivatives, are equally valuable for improving accuracy.
These observations support the statement made in the introduction
that the inclusion of derivative information in the quadrature rule
is is most likely to be useful when the computational effort required
to obtain the derivatives is significantly less than for additional
quadrature points.
For the case of integrands analytic within a strip, the quantity
$F_{D/2+1,D}$, which governs the leading behavior of the error, does
according to (\ref{eq:Dinf}) also grows geometrically with $D$ as
$D\rightarrow\infty$, which implies there is a significant penalty
for utilizing large $D$ values.
We also note that the analytic strip cases are more likely to be useful
in practice, as they are applicable to a much broader class of functions.

\section*{Acknowledgments}

We thank Rainer Kress for a useful discussion regarding
his work on this topic~\cite{Kre72a,Kre72b}.

\bibliographystyle{siamplain}
\bibliography{num_quad}

\end{document}